\documentclass[11pt,a4paper]{amsart}

\usepackage[colorinlistoftodos, textsize=tiny]{todonotes}
\makeatletter
\providecommand\@dotsep{5}
\def\listtodoname{}
\def\listoftodos{\@starttoc{tdo}\listtodoname}
\makeatother

\usepackage[utf8]{inputenc}
\usepackage[english]{babel}
\usepackage{amsmath}
\usepackage{fullpage}
\usepackage{todonotes}
\usepackage{enumerate}
\usepackage[all,cmtip]{xy}
\usepackage{hyperref}

\newtheorem{theorem}{Theorem}

\newtheorem{corollary}[theorem]{Corollary}

\newtheorem{definition}[theorem]{Definition}

\newtheorem{lemma}[theorem]{Lemma}

\newtheorem{proposition}[theorem]{Proposition}

\numberwithin{theorem}{section}

\theoremstyle{definition}
\newtheorem{remark}[theorem]{Remark}
\newtheorem{algorithm}[theorem]{Algorithm}

\usepackage[normalem]{ulem}

\newcommand{\abGal}[1]{\operatorname{Gal}\left(\overline{#1}/#1\right)}

\newcommand{\Aut}{\operatorname{Aut}}
\newcommand{\Gal}{\operatorname{Gal}}
\newcommand{\GL}{\operatorname{GL}}

\usepackage[all,cmtip]{xy}

\begin{document}

\title{Computing twists of hyperelliptic curves}
\author{Davide Lombardo, Elisa Lorenzo Garc\'ia}

\address{Davide Lombardo, Università di Pisa, Largo Bruno Pontecorvo 5, 56127 Pisa, Italy}
\email{davide.lombardo@unipi.it}

\address{Elisa Lorenzo Garc\'ia, IRMAR, Universit\'e de Rennes 1, Campus de Beaulieu, 35042  RENNES Cédex, France}
\email{elisa.lorenzogarcia@univ-rennes1.fr}

\keywords{Hyperelliptic curves, conics, twists, Galois cohomology}
\subjclass[2010]{11R34, 14H10, 14H45}

\date{}

\begin{abstract}
We give an efficient algorithm to compute equations of twists of hyperelliptic curves $C$ of arbitrary genus over any perfect field $k$ (of characteristic different from 2) starting with a cocycle in $\operatorname{H}^1(\operatorname{Gal}(\overline{k}/k),\operatorname{Aut}(C))$. We also discuss some interesting examples. 
\end{abstract}
\maketitle

\section{Introduction}

In this paper we describe an algorithm to compute explicit equations of twists of hyperelliptic curves (of genus $g \geq 2$) starting from a cocycle in a given cohomology class.
The study of twists of curves is a very useful tool for understanding some arithmetic problems:
for example, it has proved to be extremely helpful to explore the Sato-Tate conjecture \cite{AWS2016,FLS,FS}, as well as to solve some Diophantine equations \cite{PSS}, to compute $\mathbb{Q}$-curves realizing certain Galois representations \cite{BFGL}, to find counterexamples to the Hasse principle \cite{Oz}, \cite{Oz2}, and to produce curves with many points \cite{MT}. 

Let us briefly review previous work concerning the computation of twists of curves.
Geometrically, all smooth conics are isomorphic to $\mathbb{P}^1$, so the twists of a smooth conic $C/K$ are precisely the smooth conics over $K$ (recall that the anticanonical embedding realizes any genus-0 curve as a conic). The set of twists of an elliptic curve is well understood if one restricts to those automorphisms that fix the origin of the group law (see for example \cite[Section X.5]{Sil} and \cite{Top}), but the situation is way more complicated if one considers arbitrary twists. 
For curves of genus $2$, we refer to the work of Cardona \cite{Cart} and for non-hyperelliptic curves of genus $3$ to the work of the second author \cite{Lor16}. A general algorithm to compute twists of non-hyperelliptic curves is also due to the second author \cite{Lor15,Tesis}, and therefore only the general hyperelliptic case remains to be treated: thanks to the algorithm presented in this paper, we are now able to compute equations of twists of any curve of genus $g \geq 2$ (in characteristic different from 2) starting from the corresponding cocycles.

{
We remark that the main difficulty of working with hyperelliptic curves is that $\operatorname{Aut}_{\overline{k}}(C)$ does not embed in the automorphism group of $H^0(C,\Omega^1_C)$; indeed, this is the property that makes the construction of \cite{Lor15,Tesis} work, and it fails for hyperelliptic curves because the canonical bundle is not very ample in this case.
To remedy this, one could consider higher powers of the canonical bundle: given any hyperelliptic curve $C/k$ of genus $g$, the line bundle $(\Omega^1_C)^{\otimes 2}$ is very ample, so that we get an embedding of $C$ into $\mathbb{P}H^0\left(C, (\Omega^1_C)^{\otimes 2}\right)^\vee \cong \mathbb{P}^{3(g-1)-1}$. Moreover, $\operatorname{Aut}(C)$ embeds into $\operatorname{GL}\left( H^0\left(C, (\Omega^1_C)^{\otimes 2}\right)\right)$, and this would allow us to (in principle) use the approach of \cite{Lor15,Tesis} to compute models of twists of $C$. However, this seems quite inefficient from a computational point of view, since it requires one to explicitly find a basis of sections for $(\Omega^1_C)^{\otimes 2}$, and to work with models of $C$ in high-dimensional projective spaces. Our algorithm, which exploits the specific geometry of the situation, provides a much more efficient approach.}

We briefly describe the organization of the paper: in Section \ref{sect:HypCurves} we set up our notation for hyperelliptic curves and recall some well-known facts about them. Section \ref{method} forms the core of the paper, giving the details of our algorithm for the computation of twists.
Finally, in Section \ref{sect:Examples} we compute equations for several twists given by cocycles.

\subsection*{Notation}
Throughout the paper $k$ is a perfect field of characteristic different from $2$, with algebraic closure $\overline{k}$, and $\mu_n(\overline{k})$ is the set of $n$-th roots of unity in $\overline{k}$. We denote by $\zeta_n$ a fixed primitive $n$-th root of unity in $\overline{k}$ (provided that $(n,\operatorname{char}(k))=1$) and by $\operatorname{Gal}(\overline{k}/k)$ the absolute Galois group of $k$.
If $M$ is a matrix in $\GL_3(k)$, we write $M(x,y,z)$ for the linear map that sends $x, y, z$ respectively to the three entries of the vector $M \cdot (x, y, z)^t$; here $x,y,z$ can be either variables or elements of $k$.

By a curve $C/k$ we mean an irreducible, geometrically connected, projective variety of dimension 1 defined over a field $k$. We denote by $\Aut(C)$ the group of geometric automorphisms of $C$, that is, the group of automorphisms of $C \times_k {\overline{k}}$, and if $L/k$ is a field extension we denote by $\Aut_L(C)$ the group of automorphisms of $C$ defined over $L$.

\subsection*{Acknowledgment}
We thank Christophe Ritzenthaler for useful
discussions and for pointing out some of the references, { and Jeroen Sijsling for his careful reading of a preliminary version of this manuscript and for his many useful comments. We also thank Bas Edixhoven and Bjorn Poonen for interesting discussions during the workshop Arithmetic Geometry and Computer Algebra held in Oldenburg in June 2017. Finally, we thank the anonymous referee for helping us improving the clarity of the exposition and some of the proofs in the paper, especially Lemma \ref{superembedding0} and Corollary \ref{superembedding}.

Part of this work was carried out
during a visit of the first author to Universit\'e de Rennes 1, and we are grateful to this institution for its hospitality and for the ideal working conditions.

\subsection{Twists and cohomology}
We recall the notion of twist of a curve and its well-known relationship with a certain Galois cohomology set.

\begin{definition}
Let $C/k$ be a smooth projective curve. A twist of $C/k$ is 
a smooth projective curve $C'/k$ 
for which there exists a
$\overline{k}$-isomorphism $\varphi:\,C' \to C$. We identify two twists if they are isomorphic over $k$.
\end{definition}

\begin{theorem}[\cite{Sil}, Chapter X, Theorem $2.2$]\label{thm_CohomologyTwists}
The set of twists of $C/k$ is in bijection with the pointed cohomology set $\operatorname{H}^1\left(\abGal{k},\Aut(C) \right)$. The bijection is given explicitly as follows:
\begin{itemize}
\item let $C'/k$ be a twist of $C/k$ with associated $\overline{k}$-isomorphism $\varphi: C' \to C$. The corresponding cohomology class is that of the cocycle
\[
\begin{array}{cccc}
\xi : & \abGal{k} & \to & \Aut(C) \\
 & \sigma & \mapsto & \varphi \circ\,^\sigma \varphi^{-1}.
\end{array}
\]
\item let $\xi \in \operatorname{H}^1\left(\abGal{k}, \Aut(C) \right)$. We define an action of $\abGal{k}$ on $\overline{k}(C)$ extending by linearity the prescription
\[
\sigma:\,a f \mapsto \sigma(a) \cdot \xi(\sigma)(f) \quad \forall a \in \overline{k}, \forall f \in k(C).
\]
Given $\xi$,  we obtain a twist $C'$ as follows: the curve $C'$ is the unique smooth projective $k$-curve whose function field is $\overline{k}(C)^{\abGal{k}}$, the field of invariants for the action just defined. The $\overline{k}$-isomorphism $\varphi:\,C' \to C$ comes from the isomorphism $\overline{k}(C') \cong \overline{k}(C)$ induced by the natural inclusion $k(C') \hookrightarrow \overline{k}(C)$.
\end{itemize}
We say that a $\overline{k}$-isomorphism $\varphi : C' \to C$ realizes the cocycle $\xi : \operatorname{Gal}(\overline{k}/k) \to \operatorname{Aut}(C)$ if $\varphi \circ\,^\sigma\varphi^{-1} = \xi(\sigma)$ for all $\sigma \in \operatorname{Gal}(\overline{k}/k)$.
\end{theorem}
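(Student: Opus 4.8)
The plan is to run the classical Galois-descent argument for forms of an algebraic structure, here a curve, showing that the two displayed assignments are well defined and mutually inverse. First I would check that the first assignment produces a well-defined element of $\operatorname{H}^1$. Given a twist $(C',\varphi)$, since $C'$ is defined over $k$ one has ${}^\sigma C'=C'$ and ${}^\sigma C=C$, so $\xi(\sigma)=\varphi\circ{}^\sigma\varphi^{-1}$ is a $\overline k$-automorphism of $C$; the identity ${}^\sigma\xi(\tau)={}^\sigma\varphi\circ{}^{\sigma\tau}\varphi^{-1}$ gives at once the cocycle relation $\xi(\sigma\tau)=\xi(\sigma)\cdot{}^\sigma\xi(\tau)$, and $\xi$ is locally constant (hence continuous) because $\varphi$ is defined over a finite subextension of $\overline k/k$. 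If $(C_1',\varphi_1)$ and $(C_2',\varphi_2)$ are identified in the sense of the Definition above, then $\varphi_1=\beta\circ\varphi_2\circ h$ for some $\beta\in\Aut(C)$ and some $k$-isomorphism $h\colon C_1'\to C_2'$; substituting and using ${}^\sigma h=h$ collapses the computation to $\xi_1(\sigma)={}^\sigma\beta\cdot\xi_2(\sigma)\cdot\beta^{-1}$, so $[\xi_1]=[\xi_2]$. The trivial twist $(C,\mathrm{id})$ yields the trivial cocycle, so base points correspond.

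Next I would check that the second assignment produces a well-defined twist. Given a continuous cocycle $\xi$, the prescription $\sigma\colon af\mapsto\sigma(a)\cdot\xi(\sigma)(f)$ (for $a\in\overline k$ and $f\in k(C)$), extended additively, is $k$-bilinear in $(a,f)$ and multiplicative, hence defines a ring endomorphism of $\overline k(C)$ for each $\sigma$; the cocycle relation for $\xi$ is exactly what is needed for $\sigma$ to act as a semilinear, continuous action of $\Gal(\overline k/k)$ on the field $\overline k(C)$. Then I would invoke Galois descent: the fixed field $L=\overline k(C)^{\Gal(\overline k/k)}$ is a finitely generated extension of $k$ of transcendence degree $1$ with $\overline k\otimes_k L\xrightarrow{\sim}\overline k(C)$, so $L$ is the function field of a unique smooth projective curve $C'/k$, and the induced isomorphism $\overline k(C')\cong\overline k(C)$ coming from $L\hookrightarrow\overline k(C)$ furnishes a $\overline k$-isomorphism $\varphi\colon C'\to C$.

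Finally I would check that the two constructions are mutually inverse. Starting from a cocycle $\xi$ and producing $(C',\varphi)$ as above: for $f\in k(C')=L$, viewed inside $\overline k(C)$, one has $\sigma\cdot f=f$, and unwinding the twisted action this reads precisely $\xi(\sigma)=\varphi\circ{}^\sigma\varphi^{-1}$, so $(C',\varphi)$ has cocycle $\xi$. Conversely, starting from a twist $(C',\varphi)$ with cocycle $\xi$: by construction the twisted $\Gal(\overline k/k)$-action on $\overline k(C)$ determined by $\xi$ is the transport along $\varphi^{*}\colon\overline k(C)\xrightarrow{\sim}\overline k(C')$ of the natural geometric action on $\overline k(C')$, so its fixed field is $\varphi^{*}(k(C'))$ and the twist recovered from $\xi$ is $(C',\varphi)$ up to the chosen identification of twists.

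\textbf{The main obstacle} is the descent step used above: that the fixed field of a continuous semilinear Galois action on $\overline k(C)$ is the function field of a smooth projective $k$-curve, and that extension of scalars to $\overline k$ recovers $\overline k(C)$. This is the function-field form of the statement that a $k$-variety is the same datum as a $\overline k$-variety equipped with a compatible (semilinear, continuous) $\Gal(\overline k/k)$-action, i.e.\ Galois (fpqc) descent; I would either cite it or isolate it as a preliminary lemma. Everything else is bookkeeping with cocycles and with the equivalence relation defining twists, so once this ingredient is in place the theorem follows by assembling the three steps.
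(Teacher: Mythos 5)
The paper does not actually prove this statement: it is quoted, with attribution, from \cite{Sil} (Chap.~X, Theorem~2.2), so there is no in-paper argument to compare against. Your sketch is the standard Galois-descent proof, which is essentially the argument of the cited reference: check that the forward assignment lands in continuous cocycles and is constant on equivalence classes of twists, construct the inverse via the twisted semilinear action on $\overline{k}(C)$, and verify the two composites. You also correctly isolate the only nontrivial ingredient, namely that the fixed field of the twisted action is the function field of a smooth projective $k$-curve whose base change to $\overline{k}$ recovers $\overline{k}(C)$; citing Weil/Galois descent for this (as you propose) is exactly what the reference does.

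Two small points. First, to get \emph{injectivity} of the map on equivalence classes you still need the implication you did not write down: twists whose cocycles are cohomologous are equivalent (equivalently, that the cocycle-to-twist construction descends to cohomology classes). This is one line: if $\xi_1(\sigma)=\beta\,\xi_2(\sigma)\,{}^\sigma\beta^{-1}$ with $\beta\in\Aut(C)$, then $h:=\varphi_2^{-1}\circ\beta^{-1}\circ\varphi_1\colon C_1'\to C_2'$ satisfies ${}^\sigma h=h$ for all $\sigma$, hence is defined over $k$ and exhibits the equivalence. Second, in your well-definedness computation the relation should come out as $\xi_1(\sigma)=\beta\,\xi_2(\sigma)\,{}^\sigma\beta^{-1}$ (the nonabelian coboundary relation), not ${}^\sigma\beta\,\xi_2(\sigma)\,\beta^{-1}$ as written; this is a slip of notation rather than of substance. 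With these additions the argument is complete modulo the descent lemma.
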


\section{Hyperelliptic curves}\label{sect:HypCurves}
By a hyperelliptic curve over a field $k$ we mean a smooth projective curve $C$ over $k$, of positive genus $g$, and such that the canonical morphism maps $C$ to a genus-0 curve. {We observe that curves of genus 1 are not hyperelliptic with this definition, because for such curves the canonical morphism maps $C$ to a point.}
Also notice that some authors might call the curves we consider \textit{geometrically hyperelliptic}, reserving the term hyperelliptic for curves given by a hyperelliptic model over $k$ (see Equation \eqref{eq:HyperellipticModel} below). We also remark that we shall often use affine models of (Zariski-open subsets of) $C$: given an affine curve, there is, up to isomorphism, precisely one smooth projective curve with the same function field, so we can make this identification without loss of generality (as our base field is perfect, we need not to worry about the distinction between smooth and regular).

If $C$ is a hyperelliptic curve, one knows that the canonical morphism is 2-to-1, and $C$ admits an involution $\iota :C \to C$ (the hyperelliptic involution) that preserves its fibers. When the genus-0 quotient $C/\langle \iota \rangle$ is isomorphic to $\mathbb{P}^1$ over $k$ (and not just over $\overline{k}$), and provided that $\operatorname{char}(k) \neq 2$, the curve $C$ admits a $k$-model of the form
\begin{equation}\label{eq:HyperellipticModel}
y^2=f(x),
\end{equation}
where $f(x)$ is a polynomial of degree $2g+1$ or $2g+2$. A model as in \eqref{eq:HyperellipticModel} will be called a hyperelliptic model over $k$. Notice that not all hyperelliptic curves defined over $k$ admit a hyperelliptic model over $k$; however, a result of Mestre shows that this is the case for all hyperelliptic curves of even genus.

\begin{lemma}[{\cite[§2.1]{MR1106431}}]\label{isos}
 Let $C/k$ be a hyperelliptic curve (where $\operatorname{char}(k) \neq 2$). If the genus of $C$ is even, then $C$ admits a hyperelliptic model defined over $k$. \end{lemma}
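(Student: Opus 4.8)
The plan is to reduce the statement to an identity in the Picard group of $C$. The hyperelliptic involution $\iota$ is intrinsically characterized --- it is the unique automorphism of $C$ of order $2$ whose quotient has genus $0$ --- and is therefore fixed by $\abGal{k}$, hence defined over $k$. Consequently the genus-$0$ quotient $Q := C/\langle\iota\rangle$ is a smooth projective curve of genus $0$ over $k$, i.e.\ a conic, and the degree-$2$ map $\pi\colon C\to Q$ is defined over $k$. By the discussion preceding the statement it suffices to prove that $Q\cong\mathbb{P}^1_k$. I will phrase this as the vanishing of a Brauer class and show that this class is automatically killed once $g$ is even.

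Let $h\in\operatorname{Pic}(C_{\overline{k}})$ be the class of a fibre of $\pi$; since $C_{\overline{k}}$ carries a unique $g^1_2$, the class $h$ is $\abGal{k}$-invariant. The first step is to check that $Q\cong\mathbb{P}^1_k$ if and only if $h$ lies in the image of the natural map $\operatorname{Pic}(C)\to\operatorname{Pic}(C_{\overline{k}})$: if $h$ is represented by a line bundle $\mathcal{L}$ defined over $k$, then $h^0(C,\mathcal{L})=2$ and $|\mathcal{L}|$ is base-point-free, so it induces a degree-$2$ $k$-morphism $C\to\mathbb{P}^1_k$ which, being the hyperelliptic map after base change to $\overline{k}$, is $\iota$-invariant and thus factors through an isomorphism $Q\xrightarrow{\ \sim\ }\mathbb{P}^1_k$; the converse follows by taking the fibre of $\pi$ above a $k$-rational point of $Q$. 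Now consider the standard exact sequence of Galois descent (Hochschild--Serre together with Hilbert 90)
\[
0\longrightarrow\operatorname{Pic}(C)\longrightarrow\operatorname{Pic}(C_{\overline{k}})^{\abGal{k}}\xrightarrow{\ \ \delta\ \ }\operatorname{Br}(k),
\]
and put $\beta:=\delta(h)$. By the previous step we are reduced to proving $\beta=0$.

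Two identities in $\operatorname{Pic}(C_{\overline{k}})$ pin $\beta$ down. First, $2h=-\pi^*K_Q$, because over $\overline{k}$ one has $K_Q=-2P$ for $P$ a point and $h=\pi^*P$; since $K_Q$ is an honest line bundle on $Q$ defined over $k$, pulling it back shows $2h\in\operatorname{Pic}(C)$, whence $2\beta=0$. Second, for a hyperelliptic curve the canonical map factors as $C\xrightarrow{\ \pi\ }Q\hookrightarrow\mathbb{P}^{g-1}$ with second arrow the degree-$(g-1)$ rational normal embedding, so $K_C=(g-1)h$; as $K_C$ always lies in $\operatorname{Pic}(C)$ we get $(g-1)\beta=\delta(K_C)=0$. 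If $g$ is even then $g-1$ is odd, so $\beta=(g-1)\beta=0$ (using $2\beta=0$), hence $Q\cong\mathbb{P}^1_k$ and $C$ admits a hyperelliptic model over $k$.

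The main obstacle is the first reduction: correctly identifying ``$C$ has a hyperelliptic model over $k$'' with ``$Q$ is split'' and with ``$h$ descends to $\operatorname{Pic}(C)$'', and keeping the descent bookkeeping honest (distinguishing a $\abGal{k}$-stable divisor class from one admitting a $k$-rational representative). Once that dictionary is in place, the two identities $2h\in\operatorname{Pic}(C)$ and $K_C=(g-1)h$ make the rest formal; they also make transparent why the argument says nothing when $g$ is odd, since then $(g-1)\beta=0$ holds trivially.
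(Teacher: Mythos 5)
Your argument is correct, and it is worth noting that the paper itself offers no proof of this lemma at all: it is quoted from Mestre with a citation to \cite[§2.1]{MR1106431}, so there is nothing internal to compare against line by line. Your reduction dictionary is sound (the $g^1_2$ class $h$ is Galois-invariant by uniqueness, ``$h$ descends to $\operatorname{Pic}(C)$'' is correctly identified with ``$Q\cong\mathbb{P}^1_k$'' via $h^0(\mathcal{L})=2$ in one direction and pullback of a rational point in the other, and the sequence $0\to\operatorname{Pic}(C)\to\operatorname{Pic}(C_{\overline{k}})^{\abGal{k}}\xrightarrow{\delta}\operatorname{Br}(k)$ is the standard Hochschild--Serre/Hilbert 90 consequence for a smooth projective geometrically integral curve), and the two relations $2h=-\pi^*K_Q$ and $K_C=(g-1)h$ do kill $\beta$ when $g$ is even. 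For comparison, the classical route (essentially Mestre's) runs the same coprimality argument on $\operatorname{Pic}(Q)$ rather than on $\operatorname{Pic}(C)$: the canonical map factors as $C\to Q\hookrightarrow\mathbb{P}^{g-1}$ with $Q$ embedded as a rational normal curve of degree $g-1$, so $Q$ carries a $k$-rational divisor class of odd degree $g-1$ as well as $K_Q$ of degree $-2$, hence a class of degree $1$, and Riemann--Roch on a genus-$0$ curve then produces a rational point. The two arguments are equivalent (your $\beta$ is the Brauer class of $Q$), but the version on $Q$ avoids invoking the descent exact sequence; yours has the mild advantage of making the obstruction and the role of the parity of $g-1$ completely explicit, and of isolating exactly where the hypothesis $\operatorname{char}(k)\neq 2$ enters (only in converting the split double cover into a model $y^2=f(x)$).
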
 

\begin{remark}\label{rem-fields}
	Over finite fields, the field of moduli of any hyperelliptic curve is a field of definition and moreover of hyperelliptic definition \cite{LR, LRS}. In characteristic zero, for elliptic curves we know that the field of moduli is always a field of definition and of hyperelliptic definition. For genus $2$, and for any hyperelliptic curve of even genus, the fields of definition and hyperelliptic definition coincide, but they are not necessarily equal to the field of moduli (this is measured by Mestre's obstruction) \cite{LR, LRS}. We find the first example for which a field of definition is not a field of hyperelliptic definition in the genus $3$ case \cite{Hug}.
\end{remark}

Even when a hyperelliptic model is not available (necessarily in the odd genus case), the fact that the quotient $C/\langle \iota \rangle$ is of genus 0 implies the existence of a specific kind of $k$-model for $C$, which we now describe:

\begin{lemma}\label{hypmodel}
	Let $C/k$ be a hyperelliptic curve of odd genus $g$ (over a field $k$ of characteristic different from 2). There exists a $k$-rational model
	\begin{equation}\label{eq:NonHyperellipticModel}
		C:\,
	\begin{cases}
	t^2=f(x,y,z)\\z^2=ax^2+by^2
	\end{cases} \subseteq \mathbb{P}_{1,1,1,\frac{g+1}{2}}(k),
	\end{equation}
	where $f\in k[x,y,z]$ is a homogeneous polynomial of degree $g+1$ and $\mathbb{P}_{1,1,1,\frac{g+1}{2}}(k)$ is a weighted projective space over $k$ (in the variables $x,y,z,t$, with weights $1,1,1,\frac{g+1}{2}$). Moreover, $C$ has a hyperelliptic model over $k$ if and only if the quaternion algebra $\left( \frac{a,b}{k}\right)$ is trivial. 
\end{lemma}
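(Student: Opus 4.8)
The plan is to analyze the quotient $\pi\colon C\to D:=C/\langle\iota\rangle$ by the hyperelliptic involution. Throughout I take $g\ge 2$, hence $g\ge 3$ since $g$ is odd (the degenerate genus-$1$ case, in which $\iota$ is not canonical, is left aside): then $\iota$ is the unique involution of $C_{\overline k}$ with genus-$0$ quotient, so it is Galois-stable, and $\iota$, $D$ and $\pi$ are defined over $k$, with $D$ a smooth genus-$0$ curve over $k$. First I would normalize $D$: the anticanonical class $-K_D$ has degree $2$ and is very ample, so it embeds $D$ into $\mathbb P^2$ as a smooth conic $Q(x,y,z)=0$; diagonalizing the ternary quadratic form $Q$ (here $\operatorname{char}(k)\neq 2$ is used) and rescaling brings $D$ to the shape $z^2=ax^2+by^2$ with $a,b\in k^\times$.

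For the existence of the model \eqref{eq:NonHyperellipticModel} I would invoke the standard description, in characteristic $\neq 2$, of double covers in terms of line bundles and sections. Decompose $\pi_*\mathcal O_C=\mathcal O_D\oplus\mathcal M$ into $\iota$-eigensheaves; $\mathcal M$ is a line bundle, and because $C$ is smooth the multiplication identifies $\mathcal M^{\otimes 2}$ with $\mathcal O_D(-B)$, where $B\subset D$ is the reduced branch divisor. Riemann--Hurwitz over $\overline k$, where $D_{\overline k}\cong\mathbb P^1$, gives $\deg B=2g+2$, hence $\deg\mathcal M=-(g+1)$. Since $g$ is odd, $(g+1)/2$ is an integer; and since the degree map $\operatorname{Pic}(D)\to\mathbb Z$ is injective (it factors through $\operatorname{Pic}(D)\hookrightarrow\operatorname{Pic}(D_{\overline k})\cong\mathbb Z$, the first inclusion holding by Hilbert 90), we get $\mathcal M\cong\mathcal O_D\bigl(-(g+1)/2\bigr)$ over $k$. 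The algebra structure on $\mathcal O_D\oplus\mathcal M$ is then a homomorphism $\mathcal M^{\otimes 2}\to\mathcal O_D$, i.e. an element of $H^0\bigl(D,\mathcal O_D(g+1)\bigr)$; as the restriction $k[x,y,z]_{g+1}\to H^0\bigl(D,\mathcal O_D(g+1)\bigr)$ is surjective, it is represented by a homogeneous form $f\in k[x,y,z]$ of degree $g+1$, necessarily with reduced zero divisor $B$ on $D$. Unwinding all this exhibits $C$ as $\{t^2=f(x,y,z),\ z^2=ax^2+by^2\}$ (with $t$ of weight $(g+1)/2$), the desired model; note that this construction never leaves the given cover $\pi$, so no ambiguity among the constant twists of $f$ intervenes.

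For the second statement (now genuinely using $g\ge 2$) I would establish the chain of equivalences: $C$ admits a hyperelliptic model over $k$ $\Longleftrightarrow$ $D\cong_k\mathbb P^1$ $\Longleftrightarrow$ $D(k)\neq\emptyset$ $\Longleftrightarrow$ the conic $ax^2+by^2=z^2$ has a nonzero $k$-point $\Longleftrightarrow$ $\left(\frac{a,b}{k}\right)$ is split. For the first: if $D\cong_k\mathbb P^1$ this is exactly the construction recalled just before \eqref{eq:HyperellipticModel}; conversely, a hyperelliptic model $y^2=p(x)$ carries the involution $(x,y)\mapsto(x,-y)$, which by uniqueness of the hyperelliptic involution is identified with $\iota$, and whose quotient is the $x$-line $\cong\mathbb P^1_k$. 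The second equivalence is Riemann--Roch on a genus-$0$ curve (a $k$-rational degree-$1$ divisor yields a degree-$1$ morphism to $\mathbb P^1$). The third is just the chosen equation of $D$. The last is the classical equivalence between the splitting of a quaternion algebra and the solvability of its associated conic.

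I expect the only non-formal point to be the existence part, and within it the identification $\mathcal M\cong\mathcal O_D\bigl(-(g+1)/2\bigr)$ together with the consequent appearance of a form of degree exactly $g+1$. This rests simultaneously on the parity of $g$ — which is precisely what makes $t$ of weight $(g+1)/2$, and hence the equation $t^2=f$, meaningful — and on the two standard facts about a genus-$0$ curve $D/k$, namely that $\operatorname{Pic}(D_{\overline k})$ is infinite cyclic via the degree and that $\operatorname{Pic}(D)$ injects into it, so that line bundles of equal degree coincide, first over $\overline k$ and then over $k$. The normalization of $D$, the double-cover formalism, and the four equivalences are routine.
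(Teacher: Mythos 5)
Your proof is correct, and for the second half of the lemma (hyperelliptic model over $k$ $\Leftrightarrow$ split quaternion algebra) it runs along exactly the same lines as the paper: uniqueness of the genus-$0$ quotient for $g\ge 2$, and the classical equivalences between $D\cong_k\mathbb{P}^1$, the existence of a rational point on the conic, and the splitting of $\left(\frac{a,b}{k}\right)$. Where you genuinely diverge is in the existence of the model \eqref{eq:NonHyperellipticModel}: the paper simply asserts that, the quotient map being of degree $2$, the curve ``is given by'' an equation $t^2=f(x,y,z)$ over $k$, and then uses Riemann--Hurwitz only to pin down $\deg f=g+1$; it never addresses why the degree-$2$ extension of function fields is generated over $k$ by the square root of a form of that degree restricted to the conic. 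Your route through the eigensheaf decomposition $\pi_*\mathcal{O}_C=\mathcal{O}_D\oplus\mathcal{M}$, the computation $\deg\mathcal{M}=-(g+1)$, and the identification $\mathcal{M}\cong\mathcal{O}_D(-(g+1)/2)$ via the injection $\operatorname{Pic}(D)\hookrightarrow\operatorname{Pic}(D_{\overline{k}})\cong\mathbb{Z}$ supplies precisely this missing descent argument, and it makes visible where the parity of $g$ enters (a line bundle of odd degree need not exist on a pointless conic, so $t$ of weight $(g+1)/2$ is only meaningful for $g$ odd) --- a point the paper's proof uses implicitly but never isolates. The cost is heavier machinery; the benefit is that the $k$-rationality of $f$ is actually proved rather than assumed. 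Your restriction to $g\ge 2$ in the first part matches the paper's implicit use of the $2$-to-$1$ canonical morphism, so nothing is lost there.
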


\begin{proof}
Let $\iota$ be the hyperelliptic involution of $C$. 
By definition of a hyperelliptic curve, the canonical morphism gives a 2-to-1 map $C \to C/\langle \iota \rangle$. As $C/ \langle \iota \rangle$ has genus $0$, it is $k$-isomorphic to a curve given by a homogeneous equation of the form $ax^2+by^2=z^2$ for some $a,b\in k$. The quotient map has degree $2$, so $C$ is given by a model
	\[C:\,
	\begin{cases}
	t^2=f(x,y,z)\\z^2=ax^2+by^2
	\end{cases}.
	\]
	Since $C$ has genus $g$, the map $C \to C /\langle \iota \rangle $ ramifies at $2g+2=2\cdot\text{deg}(f)$ points, and $f$ must have degree $g+1$. This establishes the first part of the lemma.
	
As for the second part, recall that a curve of genus $\geq 2$ {admits at most one 2-to-1 map to a genus 0 curve (that is, $C$ is hyperelliptic in at most one way)}. If $C/k$ admits a hyperelliptic model over $k$, then by definition it {admits a 2-to-1 map to $\mathbb{P}^1_k$}, and since it also {admits a 2-to-1 map to $z^2=ax^2+by^2$} this latter quadric must be isomorphic to $\mathbb{P}^1$ over $k$, which happens precisely when the quaternion algebra $\left( \frac{a, b}{k}\right)$ is trivial.

Conversely, suppose that the quaternion algebra is trivial: then $z^2=ax^2+by^2$ has a rational point, so we can find three new variables $x', y', z'$ which are (invertible) linear functions of $x,y,z$ and satisfy $z'^2 = x'y'$. In terms of these new variables, $C$ is given by a model
\[
	\begin{cases}
	t^2=F \left(x',y',z'\right)\\ z'^2=x'y'
	\end{cases}.
	\]
Restricting to the affine chart $z'=1$ and replacing $y'$ by $1/x'$ in the first equation we find the model
\[
t^2 = F\left( x', 1/x', 1 \right);
\]
multiplying by $x'^{g+1}$ and setting $w:=t x'^{(g+1)/2}$ we finally get the hyperelliptic model
\[
w^2 = t^2 x'^{g+1} = x'^{g+1} F\left( x', 1/x', 1 \right) =: H(x').
\]
\end{proof}

In the course of the proof of Lemma \ref{hypmodel} we have seen how to construct a hyperelliptic model for a (odd genus) curve given by a model as in \eqref{eq:NonHyperellipticModel}, if such a hyperelliptic model exists. For the inverse transformation, if we start with $C \, : y^2=f(x)=\sum_{n=0}^{2g+2} a_nx^n$, with $a_{2g+2}$ not necessarily nonzero, then $C$ admits the following model of the form \eqref{eq:NonHyperellipticModel}:
\begin{equation}\label{bigmodel}
\begin{cases}
t^2=\sum_{n=0}^{g+1} a_n y'^{g+1-n}z'^n + \sum_{n=g+2}^{2g+2} a_n x'^{n-g-1}z'^{2g+2-n}
\\
z'^2 = x'y'
\end{cases}.
\end{equation}

\subsection{Automorphisms of hyperelliptic curves}\label{sect:Automorphisms}

We start by considering automorphisms of curves given by hyperelliptic models.
It is well-known that if the genus $g$ is at least 2, then 
all the isomorphisms from a curve $C$ to a curve $C'$, both given by hyperelliptic models as in \eqref{eq:HyperellipticModel}, are of the form
\begin{equation}\label{iso0}
\phi : (x,y)\mapsto \left(\frac{\alpha x+\beta}{\gamma x+\delta},y\frac{e}{(\gamma x+\delta)^{g+1}}\right)
\end{equation}
for some $\alpha,\,\beta,\,\gamma,\,\delta,\,e\in\bar{k}$ (see for example \cite[Proposition 3.1.1]{Hug} or \cite[§1.5.1]{MR3207427}).

\begin{lemma}\label{superembedding0}
Let $r$ be an integer different from $\frac{g+1}{2}$ and set $D=|g+1-2r|$.
\begin{enumerate}[(a)]
\item For any automorphism $\phi$ of $C$ there exist $\alpha', \beta', \gamma', \delta' \in \overline{k}$ such that $\phi$ can be represented as
	\begin{equation}\label{iso}
	\phi:(x,y)\mapsto \left(\frac{\alpha' x+\beta'}{\gamma' x+\delta'},y\frac{(\alpha'\delta'-\beta'\gamma')^r}{(\gamma' x+\delta')^{g+1}}\right).
	\end{equation}
The matrix $\begin{pmatrix}
\alpha' & \beta' \\ \gamma' & \delta'
\end{pmatrix}$ is well-defined up to multiplication by a $D$-th root of unity in $\overline{k}$.
\item The map 
\[
\begin{array}{cccc}
\operatorname{Aut}(C) & \to & \operatorname{GL}_2(\overline{k})/\mu_D(\overline{k}) \\
\phi & \mapsto & \begin{pmatrix}
\alpha' & \beta' \\
\gamma' & \delta'
\end{pmatrix},
\end{array}
\]
where $\alpha',\beta',\gamma',\delta'$ are as above, is a Galois-equivariant embedding.
\end{enumerate}
\end{lemma}

\begin{proof} For part (a), start with a representation of $\phi$ as in Equation \eqref{iso0}.
 Set $\Delta=\alpha\delta-\beta\gamma$ and take $(\alpha',\,\beta',\,\gamma',\,\delta')=\lambda(\alpha,\,\beta,\,\gamma,\,\delta)$, where $\lambda$ is any solution to the equation $\lambda^{2r-g-1}=e\times\Delta^{-r}$. Uniqueness of the representation up to $D$-th roots of unity and part (b) of the lemma are immediate to check.
\end{proof}

\begin{corollary}\label{superembedding}
	Let $C$ be a hyperelliptic curve of genus $g$ admitting a hyperelliptic model over $k$. If $g$ is odd, there exists a Galois-equivariant embedding 
	$$
	\operatorname{Aut}(C)\hookrightarrow\operatorname{GL}_2(\overline{k})/\{\pm1\};
	$$
	if $g$ is even, there exists a Galois-equivariant embedding 
	$$
	\operatorname{Aut}(C)\hookrightarrow\operatorname{GL}_2(\overline{k}).
	$$
\end{corollary}

\begin{proof} It suffices to take $r=(g-1)/2$ (for $g$ odd) and $r=g/2$ (for $g$ even) in the previous lemma.
\end{proof}

\medskip

We now discuss the case of $C$ not being defined by a hyperelliptic model. Let
	$$C:\,
	\begin{cases}
	t^2=f(x,y,z)\\z^2=ax^2+by^2
	\end{cases}
	$$
be a hyperelliptic curve of odd genus $g$ over a field $k$. To describe the automorphisms of $C$, we start by recalling the fundamental fact that the hyperelliptic involution is unique and lies in the center of $\operatorname{Aut}(C)$, see \cite[III.7.9]{RiemannSurfaces}. Notice that any automorphism of $C$ induces an automorphism of $C / \langle \iota \rangle =: \mathcal{L}$. It is easy to see that $\mathcal{L}$ can be embedded in $\mathbb{P}^2$, and every automorphism of $\mathcal{L}$ lifts to an automorphism of $\mathbb{P}^2$: one way to check this is to identify $\mathcal{L}$ with its anticanonical embedding, and notice that every automorphism of $\mathcal{L}$ sends the anticanonical sheaf $O(2)$ to itself, hence it induces a well-defined automorphism of $\mathbb{P}H^0(C,O(2))^\vee=\mathbb{P}^2$. 
Hence for every $k$-automorphism $\varphi$ of $C$ we have a corresponding element of $\operatorname{PGL}_3(k)=\Aut(\mathbb{P}^2_k)$, and in particular on the (homogeneous) coordinates $[x,y,z]$ the action of $\varphi$ is given by
\[
[x,y,z] \mapsto \left[ M_{11}x+M_{12}y+M_{13}z, M_{21}x+M_{22}y+M_{23}z, M_{31}x+M_{32}y+M_{33}z \right]
\]
for some $M=(M_{ij})_{1 \leq i,j \leq 3} \in \operatorname{GL}_3(k)$. {Using the fact that the hyperelliptic involution is unique, and hence stable under $\varphi$,} we also have $\iota^* \varphi^*(t)=-\varphi^*(t)$, so (since $\iota$ acts trivially on $x,y,z$ and sends $t$ to $-t$) we must have $\varphi^*(t) = t g(x,y,z)$ for some $g(x,y,z)$. Since the image of $[x,y,z,t]$ must still lie on $C$ we easily get the condition $\varphi^*(t)=et$ for some $e \in k^\times$: indeed, we must have $\varphi^*(t)^2 = f(\varphi^* (x),\varphi^* (y),\varphi^* (z))$; comparing the degree in $x,y,z$ of the two sides of this equality shows that $g$ is a constant.

\section{Computing equations of twists}\label{method}
In this section we describe our algorithms to compute twists of hyperelliptic curves and prove their correctness. In what follows $k$ is a perfect field of characteristic different from 2. We first treat the even genus case, which is easier than the odd genus one since (thanks to Lemma \ref{isos}) we always have a hyperelliptic model over the field of definition for the curve.

\subsection{The even genus case}\label{sect:EvenGenus}
The algorithm is the following.

\medskip

\noindent\fbox{\begin{minipage}{39.5em}
		\vspace{2mm}
		\begin{algorithm}\label{algo:MainEvenGenus} (For computing twists of hyperelliptic curves of even genus)
			
			\vspace{5mm}
			
			\textbf{Input:} A hyperelliptic curve $C/k$ of even genus given by a hyperelliptic equation $y^2=f(x)$, and a cocycle $\xi:\operatorname{Gal}(\overline{k}/k) \to \Aut(C)$ factoring through a finite extension $L/k$.
			
			\textbf{Output:} A curve $C'/k$ and a $\overline{k}$-isomorphism $\phi:\,C'\rightarrow C$ such that  $\xi(\sigma)=\phi\circ\,^{\sigma}\phi^{-1}$ for all $\sigma \in \Gal(\overline{k}/k)$.
			
			\vspace{5mm}
			
			\begin{enumerate}
				\item Compose $\xi$ with the embedding $\Aut(C) \hookrightarrow \GL_2(\overline{k})$ given by Corollary \ref{superembedding} to obtain a new cocycle $\psi$ in $\operatorname{Z}^1(\operatorname{Gal}(\overline{k}/k),\GL_2(\overline{k} ))$.
				\item Using Hilbert's theorem 90, compute $M \in \operatorname{GL}_2(\overline{k})$ such that ${\psi}(\sigma)=M \cdot \,^\sigma M^{-1}$.
				\item Write $M=\begin{pmatrix}
\alpha & \beta \\ \gamma & \delta
\end{pmatrix}$ and let
$
C' : y^2 = \det(M)^{-g} (\gamma x+\delta)^{2g+2} f\left( \frac{\alpha x+\beta}{\gamma x+\delta} \right).
$
Define
\[
\begin{array}{cccc}
\phi : & C' & \to & C \\
& (x,y) & \mapsto & \left(\frac{\alpha x+\beta}{\gamma x+\delta}, y\frac{(\alpha\delta-\beta\gamma)^{g/2}}{(\gamma x+\delta)^{g+1}} \right)
\end{array}
\]
$C'/k$ is the desired twist and the isomorphism $\phi:\,C'\rightarrow C$ realizes the cocycle $\xi$. 
				\vspace{5mm}
			\end{enumerate}
		\end{algorithm}
	\end{minipage}}
	
	\vspace{5mm}
	
In the light of Corollary \ref{superembedding} the correctness of the algorithm is trivial (one checks easily that the model of $C'$ thus found has $k$-rational coefficients). From the computational point of view, the important issues are computing the lift of the cocycle in step (1), for which see the proof of Lemma \ref{superembedding0}, and the computation of $M$ using Hilbert's Theorem $90$, see subsection \ref{sect:Hilbert90}.

\subsection{The odd genus case}\label{sect:OddGenus} Let $C$ be a hyperelliptic curve of odd genus $g$ defined over a perfect field of characteristic different from $2$. We now describe our approach to computing twists of $C$ in this case, starting -- in the next subsection -- with the problem of finding models of twists of conics.




\subsubsection{Computing twists of conics}\label{sect:TwistingConics}
Our approach relies on two fundamental ideas. 
The first is that working with the anti-canonical embedding of a conic $Q/k$ allows us to interpret cocycles with values in $\Aut(Q)$ as cocycles with values in $\operatorname{PGL}_3(\overline{k})$. The second is that -- as we shall show -- such cocycles can then be lifted to cocycles with values in $\GL_3(\overline{k})$, thus removing the ambiguity coming from the projective quotient. Notice that it is not true in general that one can lift cocycles with values in $\operatorname{PGL}_3(\overline{k})$ to cocycles with values in $\GL_3(\overline{k})$, but this will turn out to be the case for the specific cocycles we need to work with.


\begin{theorem}\label{thm-cle}
Let $Q$ be a smooth genus-0 curve over a field $k$, embedded as a conic section in $\mathbb{P}^2_k$. There is an injective, Galois-equivariant map $\Aut(Q) \hookrightarrow \Aut(\mathbb{P}_{\overline{k}}^2) \cong \operatorname{PGL}_3(\overline{k})$. Furthermore, any cocycle $\xi : \Gal(\overline{k}/k) \to \Aut(Q) \hookrightarrow \operatorname{PGL}_3(\overline{k})$ 
can be lifted to a cocycle $\tilde{\xi}: \Gal(\overline{k}/k) \to \operatorname{GL}_3(\overline{k})$.
\end{theorem}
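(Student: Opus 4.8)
My plan for the first assertion is to recover the embedding from the uniqueness of a line bundle, much as was already invoked in Section~\ref{sect:Automorphisms}. Since $Q_{\overline{k}}=Q\times_k\overline{k}\cong\mathbb{P}^1_{\overline{k}}$, any automorphism $\varphi$ of $Q_{\overline{k}}$ preserves $\mathcal{O}_{\mathbb{P}^2}(1)|_{Q_{\overline{k}}}$, this being the unique degree-$2$ line bundle on $Q_{\overline{k}}$; hence $\varphi$ acts, up to a scalar, on the $3$-dimensional space $\operatorname{H}^0(Q_{\overline{k}},\mathcal{O}_{\mathbb{P}^2}(1)|_{Q_{\overline{k}}})$, which the restriction map identifies with $\operatorname{H}^0(\mathbb{P}^2_{\overline{k}},\mathcal{O}(1))$ (an isomorphism, since $Q_{\overline{k}}$ lies on no line and both spaces have dimension $3$). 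This produces an element of $\operatorname{PGL}_3(\overline{k})=\Aut(\mathbb{P}^2_{\overline{k}})$ restricting to $\varphi$ on $Q_{\overline{k}}$, and it is the unique such element because two elements of $\operatorname{PGL}_3(\overline{k})$ that agree on a smooth conic (which spans $\mathbb{P}^2$ and contains four points in general position) coincide. Injectivity of the resulting homomorphism $\Aut(Q)=\Aut(Q_{\overline{k}})\to\operatorname{PGL}_3(\overline{k})$ is then clear, and since all the objects involved are defined over $k$ it is $\operatorname{Gal}(\overline{k}/k)$-equivariant; in particular a cocycle valued in $\Aut(Q)$ does push forward to one valued in $\operatorname{PGL}_3(\overline{k})$.

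For the second assertion I would argue geometrically. Pushing $\xi$ forward to $\operatorname{PGL}_3(\overline{k})$ and twisting $\mathbb{P}^2_k$ by the resulting (continuous) cocycle gives a smooth projective surface $X/k$; being a twist of $\mathbb{P}^2$, it is the Brauer--Severi variety of a central simple $k$-algebra $A$, which has degree $3$ because $\dim X=2$. Now $\xi$ is a coboundary in $\operatorname{PGL}_3(\overline{k})$ exactly when $X\cong\mathbb{P}^2_k$, i.e. when the class $[A]\in\operatorname{Br}(k)$ is trivial, so that is what I must prove. The key point is that $X$ contains a conic defined over $k$: since the closed immersion $Q\hookrightarrow\mathbb{P}^2$ is defined over $k$ and each $\xi(\sigma)$ maps $Q$ to itself by construction, twisting this immersion by $\xi$ yields a $k$-embedding $Q'\hookrightarrow X$, where $Q':={}_\xi Q$ is a twist of $Q$ and hence again a smooth curve of genus $0$ over $k$.

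The heart of the argument is then that every smooth conic over $k$ has a closed point of degree at most $2$ — intersecting $Q'$, in its plane embedding, with a line defined over $k$ produces a length-$2$ scheme, since a smooth conic contains no line — so via $Q'\hookrightarrow X$ the surface $X$ acquires a closed point $P$ with $[\kappa(P):k]\le 2$. As $X(\kappa(P))\neq\emptyset$, the field $\kappa(P)$ splits $A$, hence $\operatorname{ind}(A)$ divides $[\kappa(P):k]\le 2$; but $\operatorname{ind}(A)$ also divides $\deg(A)=3$, which forces $\operatorname{ind}(A)=1$, i.e. $[A]=0$ and $X\cong\mathbb{P}^2_k$. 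Therefore $\xi$ is a coboundary, say $\xi(\sigma)=N\cdot{}^{\sigma}N^{-1}$ for some $N\in\operatorname{PGL}_3(\overline{k})$; picking any lift $\widetilde{N}\in\operatorname{GL}_3(\overline{k})$ of $N$, the map $\widetilde{\xi}(\sigma):=\widetilde{N}\cdot{}^{\sigma}\widetilde{N}^{-1}$ is a cocycle $\operatorname{Gal}(\overline{k}/k)\to\operatorname{GL}_3(\overline{k})$ reducing to $\xi$ modulo scalars, which is the desired lift. The step I expect to require the most care is this geometric reduction: verifying that $Q'$ really is realized as a $k$-subvariety of the twisted surface $X$, and correctly invoking the standard facts about Brauer--Severi varieties and about the index of a central simple algebra dividing the degree of any splitting field; no hard computation is involved, and in fact the argument goes through over an arbitrary field (the number-field hypothesis in the statement is only for convenience).
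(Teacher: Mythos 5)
Your proof of the first assertion is essentially the paper's: both identify the plane embedding of $Q$ with the one given by the unique degree-$2$ divisor class and let automorphisms act on the corresponding $3$-dimensional space of sections (you add the useful observation that the extension to $\operatorname{PGL}_3$ is unique, which the paper leaves implicit). For the second assertion you take a genuinely different route. The paper stays inside Galois cohomology: it uses the connecting map $\delta:\operatorname{H}^1(\Gal(\overline{k}/k),\operatorname{PGL}_3)\to\operatorname{H}^2(\Gal(\overline{k}/k),\mu_3)$ coming from $1\to\mu_3\to\operatorname{SL}_3\to\operatorname{PGL}_3\to 1$, kills $\delta([\xi])$ by a restriction--corestriction argument over a quadratic extension splitting the conic (supplied by Lemma \ref{lemma:ConicsBecomeIsomorphicOverQuadraticExtension}, whose proof is via Brauer--Hasse--Noether and hence needs $k$ to be a number field), and concludes from $2\delta([\xi])=0=3\delta([\xi])$. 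You instead twist $\mathbb{P}^2$ by $\xi$ to get a Severi--Brauer surface $X$ containing the twisted conic $Q'$ over $k$, extract a closed point of degree at most $2$ on $Q'$ (hence on $X$), and play the index of the associated degree-$3$ central simple algebra against $2$. Both arguments ultimately exploit $\gcd(2,3)=1$, but yours replaces the global class-field-theoretic input by the elementary fact that a conic acquires a point over a quadratic extension, so it is valid over an arbitrary field and in fact strengthens the statement beyond what the paper proves (the paper explicitly restricts to number fields "for simplicity"). The only steps you should spell out are the descent of the closed immersion $Q\hookrightarrow\mathbb{P}^2$ along $\xi$ (which works precisely because each $\xi(\sigma)\in\operatorname{PGL}_3(\overline{k})$ stabilizes $Q$ by construction) and the fact that the abstract twist $Q'$ carries a $k$-rational plane embedding, e.g.\ via the anticanonical class; both are routine. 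Your proposal is correct.
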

\begin{proof}
The conic $Q$ is embedded in $\mathbb{P}^2$ via the linear series corresponding to $O(2)$. Any automorphism of $Q_{\overline{k}}$ pulls $O(2)$ back to itself (since there is only one equivalence class of divisors of degree 2), hence it induces an automorphism of $\mathbb{P}^2_{\overline{k}}$ which, upon restriction to $Q$, is the automorphism we started with. This proves the first statement.

For the second part, notice that given a class in $\operatorname{PGL}_3(\overline{k})$ there are only two matrices $\pm M \in \GL_3(\overline{k})$ representing that class and whose action on the ring $\overline{k}[x,y,z]$  preserves the equation of $Q$. This gives us a way to uniquely determine an element of $\operatorname{GL}_3(\overline{k})/\mu_2(\overline{k})$ from an element of $\Aut(Q) \subseteq \operatorname{PGL}_3(\overline{k})$, thus providing a lift of the embedding $\Aut(Q) \hookrightarrow \operatorname{PGL}_3(\overline{k})$ to $\Aut(Q) \hookrightarrow \operatorname{GL}_3(\overline{k})/\mu_2(\overline{k})$.
Finally we notice that $\operatorname{GL}_3(\overline{k})/\mu_2(\overline{k})\simeq\operatorname{GL}_3(\overline{k})$ by the isomorphism  $A\mapsto\frac{1}{\operatorname{det}(A)}A$. Now it is easy to check that the composition $\Aut(Q) \hookrightarrow \operatorname{GL}_3(\overline{k})$ is Galois-equivariant.
\end{proof}

\begin{remark}\label{CoolEmbedding} If $Q=\mathbb{P}^1$ and it is embedded in $\mathbb{P}^2$ as $Q_0:\,z^2=xy$, the previous lift is given by the map $\psi:\,\operatorname{Aut}(\mathbb{P}^1)\simeq \operatorname{PGL}_2(\overline{k}) \hookrightarrow\operatorname{GL}_3(\overline{k})$ defined by
$$
\left[\begin{pmatrix} \alpha&\beta \\ \gamma& \delta \end{pmatrix}\right]\mapsto\frac{1}{\alpha\delta-\beta\gamma}\begin{pmatrix} \alpha^2& \beta^2 & 2\alpha\beta \\ \gamma^2 & \delta^2 & 2\gamma\delta \\ \alpha\gamma & \beta\delta & \beta\gamma+\alpha\delta\end{pmatrix}.
$$
\end{remark}

\begin{remark}\label{explicit-lift}
	In general, if $Q:\,ax^2+by^2=z^2$, we consider the isomorphism $$B=\begin{pmatrix}\sqrt{a} & \sqrt{-b} & 0\\
	\sqrt{a} & -\sqrt{-b} & 0\\
	0 & 0 & 1
	\end{pmatrix}:\,Q\rightarrow Q_0.$$ The embedding of $Q$ in $\mathbb{P}^2$ in Theorem \ref{thm-cle} can then be obtained by choosing as basis of $\operatorname{H}^0(Q_{\overline{k}},O(2))$ the three sections $\begin{pmatrix}
	x \\ y \\ z
	\end{pmatrix} = B^{-1} \begin{pmatrix}
	(dt)^* \\ t^2 (dt)^* \\ t(dt)^*
	\end{pmatrix}$; notice that by definition of $B$ we have $ax^2+by^2=z^2$, so $Q$ is again identified with its anticanonical embedding. The corresponding Galois-equivariant embedding is
\[
	\operatorname{Aut}(Q)\xrightarrow[\text{by }B]{\text{conj.}}\operatorname{Aut}(Q_0)\xrightarrow{\sim}\operatorname{Aut}(\mathbb{P}^1)\xrightarrow{\psi}\operatorname{GL}_3(\overline{k})\xrightarrow[\text{by }B^{-1}]{\text{conj.}}\operatorname{GL}_3(\overline{k}),
\]
where we consider $\Aut(Q)$ and $\Aut(Q_0)$ as subgroups of $\Aut(\mathbb{P}^2_{\overline{k}}) \cong \operatorname{PGL}_3(\overline{k})$.
	\end{remark}

\subsubsection{Hilbert's Theorem 90}\label{sect:Hilbert90}
Hilbert's Theorem $90$ is the statement that, for every integer $n \geq 1$ and for every Galois extension of fields $L/k$, the first cohomology set $\operatorname{H}^1(\Gal(L/k),\GL_n(L))$ is trivial. Thus, given a cocycle $\overline{\xi}:\,\operatorname{Gal}(L/k)\to\operatorname{GL}_n(L)$, there exists a matrix $M \in \GL_n(L)$ such that $\overline{\xi}(\sigma)=M \cdot\,^\sigma M^{-1}$. Moreover, this matrix $M$ can be explicitly computed \cite[p.159, Prop. 3]{SerCL}, for instance by choosing a sufficiently generic $M_0 \in \GL_n(L)$ and setting
\begin{equation}\label{eq:Thm90}
M:=\sum_{\sigma \in \Gal(L/k)} \overline{\xi}(\sigma) \cdot {}^{\sigma}M_0.
\end{equation}
Another approach (used in Section $3$ in \cite{Lor15}) is to choose the columns of the matrix $M$ to be a $k$-basis of the $k$-vector subspace of $L^n$ fixed by the action of $\Gal(L/k)$ given by
$$
\sigma:\,v\mapsto \overline{\xi}(\sigma) \cdot {}^{\sigma}v.
$$

\subsubsection{Twisting the underlying conic}
We now describe our algorithm for twisting hyperelliptic curves of odd genus. Recall that our input consists of a cocycle $\xi : \Gal(L/k) \to \Aut_L(C)$; composing $\xi$ with the Galois-equivariant embedding of $\Aut(C)$ in $\GL_3(L)$ provided by Theorem \ref{thm-cle} (and made explicit in Remark \ref{explicit-lift}) we obtain a cocycle $\overline{\xi} : \Gal(L/k) \to \GL_3(L)$.
Applying the explicit form of Hilbert's theorem 90 just described we obtain a matrix $M \in \operatorname{GL}_3(L)$ such that $\overline{\xi}(\sigma) = M \cdot {}^\sigma M^{-1}$ for all $\sigma \in \Gal(L/K)$. We set $Q'(x,y,z)=Q(M(x,y,z))$; using the fact that the matrix $\overline{\xi}(\sigma)$ preserves the equation of $Q$ for every $\sigma \in \Gal(L/k)$, one checks easily that $Q'(x,y,z)$ has $k$-rational coefficients.
Our twist will be described in terms of $Q'$ and of the polynomial $f(M(x,y,z))$; the latter, however, is in general not defined over $k$.
In order to resolve this inconvenience, we show the following:

\begin{proposition}\label{rationalmodel}
	There exist $v\in L^\times$ and $h\in L[x,y,z]$
	such that $vf(M(x,y,z))+h\cdot Q'$ has coefficients in $k$.
\end{proposition}
\begin{proof} As $M \cdot {}^\sigma M^{-1}$ induces an automorphism of $C$, we see that for every $\sigma$ there exist $a_\sigma \in L^\times$ and $p_\sigma \in L[x,y,z]$ such that ${}^\sigma(f(M(x,y,z))=a_\sigma f(M(x,y,z))+(p_\sigma \cdot Q)(M(x,y,z))$. It is easy to check that the map $\sigma \mapsto a_\sigma$ is a (continuous) cocycle of $\operatorname{Gal}(\overline{k}/k)$ with values in $\overline{k}^\times$. Let $v \in \overline{k}^\times$ be such that $a_\sigma=v/{}^\sigma v$ (such a $v$ can again be computed explicitly thanks to the effective form of Hilbert's theorem 90: notice that since $a_\sigma$ takes values in $L^\times$, $v$ can also be taken in $L^\times$).
Now let $m \in L$ be an element such that $\operatorname{tr}_{L/k}(m)=1$ (if $\operatorname{char}(k)$ does not divide $[L:k]$ one can take $m=[L:k]^{-1}$) and set $h(x,y,z)=\sum_{\tau}\,^{\tau}m \, ^{\tau}v \,  p_\tau(M(x,y,z))$.
Multiply the identity ${}^\sigma(f(M(x,y,z))=a_\sigma f(M(x,y,z))+(p_\sigma \cdot Q)(M(x,y,z))$ by ${}^\sigma(mv)$ and sum over $\sigma \in \Gal(L/k)$:
\[
\sum_{\sigma} {}^\sigma(mv) {}^\sigma(f(M(x,y,z)) = \sum_{\sigma} \left( {}^\sigma(mv) a_\sigma f(M(x,y,z)) + {}^\sigma(mv) (p_\sigma \cdot Q)(M(x,y,z)) \right).
\]
The left hand side has coefficients in $k$; we now show that the right hand side is $vf(M(x,y,z))+h \cdot Q'$. Indeed, recalling that $a_\sigma = v/{}^\sigma v$ we obtain 
\[
\begin{aligned}
\sum_{\sigma} & \left( {}^\sigma(mv) a_\sigma f(M(x,y,z))  + {}^\sigma(mv)  (p_\sigma \cdot Q)(M(x,y,z)) \right) = \\
& = \sum_{\sigma} {}^\sigma m \, vf(M(x,y,z)) + Q'(x,y,z)\sum_{\sigma} {}^\sigma m {}^\sigma v \; p_\sigma(M(x,y,z)) \\
& = \left(\sum_{\sigma} {}^\sigma m \right) vf(M(x,y,z)) + h(x,y,z) \, Q'(x,y,z)  \\
& = vf(M(x,y,z)) + h(x,y,z) \, Q'(x,y,z).
\end{aligned}
\]
\end{proof}

\begin{corollary}\label{corollary:AlmostCorrectCocycle}
Let $g(x,y,z) = vf(M(x,y,z))+h\cdot Q'$ be as above. The curve
\[
C_0 : \begin{cases}
t^2 = g(x,y,z)\\
Q'(x,y,z)=0
\end{cases}
\]
is a $k$-twist of $C$. An isomorphism $\psi_0:C_0 \to C$ is given by $(x,y,z,t) \mapsto (M(x,y,z),  t/\sqrt{v})$.
For all $\sigma \in \operatorname{Gal}(\overline{k}/k)$, the automorphisms $\psi_0 \circ {}^\sigma \psi_0^{-1}$ and $\xi(\sigma)$ of $C$ differ at most by the hyperelliptic involution.
\end{corollary}
\begin{proof}
The only nontrivial statement is that $\psi_0 \circ {}^\sigma \psi_0^{-1}$ and $\xi(\sigma)$ differ at most by the hyperelliptic involution, and this follows from the fact that $\xi(\sigma)$ and $M \cdot \, {}^\sigma M^{-1}$ induce the same automorphism on the conic $C/\langle \iota \rangle$.
\end{proof}

\subsubsection{Pinning down the quadratic twist}\label{sect:FinishProof}

In view of Corollary \ref{corollary:AlmostCorrectCocycle}, we can define a new cocycle $\xi_0(\sigma):=\psi_0 \circ {}^\sigma \psi_{0}^{-1}$, which differs from $\xi(\sigma)$ at most by the hyperelliptic involution. Define $i:\operatorname{Gal}(\overline{k}/k) \to \langle \iota \rangle$ by the prescription
$
\xi_0(\sigma)=\xi(\sigma) i(\sigma).
$

\begin{lemma}\label{quadtwist}
The map $i$ is a group homomorphism. By Galois correspondence, its kernel defines an at most quadratic extension $k(\sqrt{e})$ of $k$.
\end{lemma}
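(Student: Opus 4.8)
The plan is to verify directly that $i$ is a homomorphism by plugging the defining relation $\xi_0(\sigma)=\xi(\sigma)i(\sigma)$ into the cocycle identities for $\xi$ and $\xi_0$. First I would recall that both $\xi$ and $\xi_0$ are cocycles with values in $\Aut(C)$: $\xi$ is a cocycle by hypothesis, and $\xi_0(\sigma)=\psi_0\circ{}^\sigma\psi_0^{-1}$ is a cocycle because it has the standard coboundary-like shape (Theorem \ref{thm_CohomologyTwists}). Writing out $\xi_0(\sigma\tau)=\xi_0(\sigma)\,{}^\sigma\xi_0(\tau)$ and substituting $\xi_0=\xi\cdot i$ on both sides gives
\[
\xi(\sigma\tau)\,i(\sigma\tau)=\xi(\sigma)\,i(\sigma)\,{}^\sigma\bigl(\xi(\tau)i(\tau)\bigr)=\xi(\sigma)\,i(\sigma)\,{}^\sigma\xi(\tau)\,{}^\sigma i(\tau).
\]
Using the cocycle identity for $\xi$, the left side equals $\xi(\sigma)\,{}^\sigma\xi(\tau)\,i(\sigma\tau)$. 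Cancelling $\xi(\sigma)\,{}^\sigma\xi(\tau)$ on the left, we are left with $i(\sigma\tau)=i(\sigma)\,{}^\sigma\xi(\tau)\,{}^\sigma i(\tau)\bigl({}^\sigma\xi(\tau)\bigr)^{-1}\cdot\bigl({}^\sigma\xi(\tau)\bigr)$—wait, more carefully, we need to move ${}^\sigma\xi(\tau)$ past $i(\sigma)$. Here is where the key structural point enters: $i(\sigma)$ takes values in $\langle\iota\rangle$, and the hyperelliptic involution $\iota$ is central in $\Aut(C)$ (it acts trivially on $x,y,z$ and is intrinsically characterized, hence commutes with every automorphism). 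Therefore $i(\sigma)$ commutes with ${}^\sigma\xi(\tau)$, and also ${}^\sigma$ of anything in $\langle\iota\rangle$ is again $\iota$ or the identity, so ${}^\sigma i(\tau)=i(\tau)$ and ${}^\sigma\iota=\iota$. After these cancellations the relation collapses to $i(\sigma\tau)=i(\sigma)i(\tau)$, i.e. $i$ is a homomorphism $\Gal(\overline{k}/k)\to\langle\iota\rangle\cong\Z/2\Z$.

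Next I would observe that since $\xi$ factors through the finite extension $L/k$ and $\psi_0$ is defined over $L(\sqrt{v})$ (Remark \ref{rmk:MustExtractSquareRoot}), the cocycle $\xi_0$, and hence $i$, is continuous and factors through a finite extension of $k$. A continuous homomorphism $i:\Gal(\overline{k}/k)\to\Z/2\Z$ has open kernel, and by the Galois correspondence $\ker i$ corresponds to a field extension of $k$ of degree $[\Gal(\overline{k}/k):\ker i]\le 2$; this degree is $1$ if $i$ is trivial and $2$ otherwise. In the latter case the fixed field is of the form $k(\sqrt{e})$ for some $e\in k^\times$ (as $\operatorname{char}k\neq 2$, every quadratic extension is of this shape), and in the former case we may take $e$ to be a square, so that $k(\sqrt{e})=k$. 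This gives the claimed description of $\ker i$.

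The only genuine subtlety is the centrality of $\iota$ in $\Aut(C)$, which makes all the manipulations with $i(\sigma)$ go through; everything else is the routine cocycle bookkeeping sketched above together with the standard correspondence between continuous index-$2$ subgroups of the absolute Galois group and quadratic extensions. I would therefore expect the write-up to be short, the one point deserving an explicit sentence being why $\iota$ is central (it is the unique hyperelliptic involution, so it is preserved by conjugation by any automorphism, hence commutes with all of them).
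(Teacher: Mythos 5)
Your proposal is correct and follows essentially the same route as the paper: both arguments combine the cocycle identities for $\xi$ and $\xi_0$ with the facts that $\iota$ is central in $\Aut(C)$ and fixed by Galois, so that $i$ is a cocycle with values in a trivial module, i.e.\ a homomorphism, whose kernel has index at most $2$. The paper's write-up is just a terser version of your computation, so no further comparison is needed.
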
 

\begin{proof}
Since $\xi$ and $\xi_0$ are both cocycles, so is $\sigma \mapsto i(\sigma)=\xi(\sigma)^{-1}\xi_0(\sigma) \in \langle \iota \rangle$. Since the Galois action on $\langle \iota \rangle$ is trivial, a cocycle is the same thing as a group homomorphism.
The second statement follows from the fact that the image of $i$ has order at most 2.
\end{proof}

\begin{theorem}\label{thm:RestatementMain}
Let $e$ be as in the previous lemma. The curve
\[
C' \, : \begin{cases}
et^2=g(x,y,z) \\
Q(M(x,y,z))=0
\end{cases}
\]
is the $k$-twist of $C$ corresponding to the cocycle $\xi$. The map $\phi : C' \to C$ that sends $(x,y,z,t)$ to $(M(x,y,z), \sqrt{e/v}t)$ realizes the cocycle $\xi$.
\end{theorem}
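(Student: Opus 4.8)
The strategy is to combine the conclusion of Lemma~\ref{lemma:AlmostCorrectCocycle}, which says that $\psi_0 : C_0 \to C$ realizes the cocycle $\xi_0 = \xi \cdot i$ (where $i$ takes values in $\langle \iota \rangle$), with a quadratic twist by the element $e$ produced in Lemma~\ref{quadtwist}, so as to ``kill'' the discrepancy $i$. First I would record that, by Lemma~\ref{quadtwist}, the homomorphism $i : \operatorname{Gal}(\overline{k}/k) \to \langle \iota \rangle$ factors through $\operatorname{Gal}(k(\sqrt e)/k)$; concretely, $i(\sigma) = \iota$ if and only if $\sigma(\sqrt e) = -\sqrt e$, so $i(\sigma)$ is the image of $\sigma$ under the quadratic character cutting out $k(\sqrt e)$. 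Next I would compute $\phi \circ {}^\sigma \phi^{-1}$ directly. Writing $\phi(x,y,z,t) = (M(x,y,z), \sqrt{e/v}\,t)$, the same computation as in Lemma~\ref{lemma:AlmostCorrectCocycle} gives
\[
\phi \circ {}^\sigma \phi^{-1}(x,y,z,t) = \left( \overline{\xi}(\sigma)(x,y,z), \frac{\sqrt{e/v}}{{}^\sigma \sqrt{e/v}}\, t \right) = \left( \overline{\xi}(\sigma)(x,y,z), \frac{{}^\sigma \sqrt e}{\sqrt e} \cdot \frac{\sqrt v}{{}^\sigma \sqrt v}\, t \right).
\]
The factor $\sqrt v / {}^\sigma \sqrt v$ is exactly the scalar that appeared in $\psi_0 \circ {}^\sigma \psi_0^{-1}$, i.e. it realizes $\xi_0(\sigma) = \xi(\sigma) i(\sigma)$ together with the action $\overline{\xi}(\sigma)$ on $x,y,z$; multiplying by ${}^\sigma \sqrt e / \sqrt e \in \{\pm 1\}$ multiplies the action on $t$ by $i(\sigma)^{-1}$ (since $i(\sigma) = \iota$ precisely when ${}^\sigma\sqrt e = -\sqrt e$, and $\iota$ negates $t$). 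Hence $\phi \circ {}^\sigma \phi^{-1} = \xi_0(\sigma) i(\sigma)^{-1} = \xi(\sigma)$ for all $\sigma$, which is what we must show.

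Before that, though, I need to check that $\phi$ is a well-defined $\overline{k}$-isomorphism from $C'$ to the model~\eqref{eq:NonHyperellipticModel} of $C$. This is the analogue of the ``$\psi_0$ is well-defined'' bullet in the proof of Lemma~\ref{lemma:AlmostCorrectCocycle}: the second defining equation $Q(M(x,y,z)) = 0$ is visibly preserved, and for the first one, a point of $C'$ satisfies $e t^2 = g(x,y,z)$, while $g(x,y,z) = \frac{1}{[L:k]}\operatorname{tr}_{L/k}(v f(M(x,y,z)))$ equals $v f(M(x,y,z))$ on $C'$ by the trace computation already carried out in Lemma~\ref{lemma:AlmostCorrectCocycle} (which only used $Q(M(x,y,z)) = 0$); therefore $(\sqrt{e/v}\,t)^2 = (e/v) t^2 = f(M(x,y,z))$, so $\phi$ lands on $C$, and it is invertible because $M$ is. I would also note that $\phi$ is defined over $L(\sqrt v, \sqrt e)$, which is harmless as in Remark~\ref{rmk:MustExtractSquareRoot}. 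Finally, once we know $\phi : C' \to C$ is a $\overline{k}$-isomorphism realizing $\xi$, Theorem~\ref{thm_CohomologyTwists} identifies $C'$ as \emph{the} twist of $C$ corresponding to the class of $\xi$ in $\operatorname{H}^1(\operatorname{Gal}(\overline{k}/k), \Aut(C))$ — and in the even genus case this is up to the birational modification already discussed, which does not change the function field.

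\textbf{Main obstacle.} The only subtle point is the bookkeeping of square roots: $\sqrt{e/v}$, $\sqrt e$ and $\sqrt v$ are each defined only up to sign, and one must make sure that the ambiguities are consistent (e.g. that $\sqrt{e/v}$ can be taken to be $\sqrt e/\sqrt v$) and, more importantly, that the residual sign ambiguity affects $\phi \circ {}^\sigma\phi^{-1}$ only through the hyperelliptic involution, hence does not disturb the identity $\phi\circ{}^\sigma\phi^{-1} = \xi(\sigma)$ once the extension $k(\sqrt e)$ has been pinned down by the kernel of $i$. Concretely, the key identity to verify carefully is $\dfrac{{}^\sigma\sqrt e}{\sqrt e} = i(\sigma)^{\mp 1}$ as an element of $\langle\iota\rangle$ acting on $t$, which is precisely the content of the Galois correspondence in Lemma~\ref{quadtwist}; everything else is a direct substitution into computations already performed.
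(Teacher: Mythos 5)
Your proposal is correct and follows essentially the same route as the paper: the paper factors $\phi = \psi_0 \circ \psi_1$ with $\psi_1(x,y,z,t)=(x,y,z,\sqrt{e}\,t)$, notes $\psi_1\circ{}^\sigma\psi_1^{-1}=i(\sigma)$, and uses centrality of $\iota$ to conclude $\phi\circ{}^\sigma\phi^{-1}=\xi_0(\sigma)i(\sigma)^{-1}=\xi(\sigma)$, which is exactly the computation you carry out in expanded form. Your explicit verification that $\phi$ maps $C'$ onto the model of $C$ is a point the paper dispatches with "it is clear that $C'$ and $C_0$ are isomorphic through $\psi_1$," so no substantive difference remains.
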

\begin{proof}
We have $\phi=\psi_0 \circ \psi_1$, where $\psi_1$ is the isomorphism $C' \to C_0$ given by $\psi_1(x,y,z,t)=(x,y,z,\sqrt{e}t)$. It is clear that $C'$ and $C_0$ are isomorphic through $\psi_1$, so (in view of Corollary \ref{corollary:AlmostCorrectCocycle}) $C'$ is a $k$-twist of $C$. It remains to show that $\phi$ realizes the cocycle $\xi$. Notice that by construction one has $\psi_1 \circ  {}^\sigma \psi_1^{-1}=i(\sigma)$, where $i$ is the homomorphism of the previous lemma. Further observing that the image of $i$ is contained in $\langle \iota \rangle$, hence it is central in $\Aut(C)$, we have 
\[
\begin{aligned}
\phi \circ {}^\sigma \phi^{-1} & = \psi_0 \psi_1 \circ {}^\sigma (\psi_0 \psi_1)^{-1} \\
& = \psi_0 \circ \psi_1 \circ  {}^\sigma \psi_1^{-1}\circ  {}^\sigma \psi_0^{-1} \\
& = \psi_0 \circ  i(\sigma) \circ  {}^\sigma \psi_0^{-1} \\
& = \psi_0 \circ  {}^\sigma \psi_0^{-1} \circ i(\sigma) \\
& = \xi_0(\sigma) \circ  i(\sigma) \\
& = \xi_0(\sigma) \circ  i(\sigma)^{-1} =\xi(\sigma),
\end{aligned}
\]
where $i(\sigma)=i(\sigma)^{-1}$ since $\iota$ is of order 2.
\end{proof}

Putting everything together, we have proven the correctness of the following algorithm:

\vspace{5mm}

\noindent\fbox{\begin{minipage}{39.5em}
\vspace{2mm}
\begin{algorithm}\label{algo:Main} (For computing twists of hyperelliptic curves of odd genus)

\vspace{5mm}

\textbf{Input:} A hyperelliptic curve $C/k$  and a cocycle $\xi:\operatorname{Gal}(\overline{k}/k) \to \Aut(C)$ factoring through a finite extension $L/k$.

\textbf{Output:} A curve $C'/k$ and a $\overline{k}$-isomorphism $\phi:\,C'\rightarrow C$ such that  $\xi(\sigma)=\phi\circ\,^{\sigma}\phi^{-1}$ for all $\sigma \in \Gal(\overline{k}/k)$.

\begin{enumerate}
\item Find a model of $C$ of the form \eqref{eq:NonHyperellipticModel} and let $Q:\,z^2=ax^2+by^2$.
\item Project $\xi$ into $Z^1(\operatorname{Gal}(\overline{k}/k),\Aut(Q))$.
\item Compose with the embedding of $\Aut(Q)$ in $\GL_3(\overline{k})$ given by Remark \ref{explicit-lift} to obtain a cocycle $\overline{\xi}:\,\operatorname{Gal}(\overline{k}/k)\rightarrow\operatorname{GL}_3(\overline{k})$.
\item Use Hilbert's theorem 90 to compute a matrix $M \in \operatorname{GL}_3(\overline{k})$ such that $\overline{\xi}(\sigma)=M\, \cdot \, ^\sigma \hspace{-1mm} M^{-1}$.
\item Use Proposition \ref{rationalmodel} to compute a $k$-rational model of 
$$
\begin{cases} t^2=f(M(x,y,z))\\ Q'(x,y,z):=Q(M(x,y,z))=0\end{cases}
$$
of the form
$$
C_0:\,\begin{cases} t^2=g(x,y,z)=vf(M(x,y,z))+h(x,y,z)Q'(x,y,z)\\ Q'(x,y,z)=0\end{cases}.
$$
\item Compute $e$ as in Lemma \ref{quadtwist}. The twist of $C$ by $\xi$ is
$$
C':\,\begin{cases} et^2=g(x,y,z)\\ 0=Q'(x,y,z)\end{cases}.
$$
{The map $\phi:\,(x,y,z,t) \mapsto (M(x,y,z), \sqrt{e/v}t)$ is an isomorphism between $C'$ and the model of $C$ chosen in part (1), and it realizes $\xi$.}
\end{enumerate}
\end{algorithm}
\end{minipage}}

\vspace{5mm}

\section{Examples}\label{sect:Examples}
In this section we explicitly compute some twists of hyperelliptic curves; {this will show how the different parts of the algorithm come together, and why all the steps are necessary.}

\subsection{Example 1}\label{sect:Example2}
Consider the curve $C:y^2=x^9-x\; /\mathbb{Q}$ of even genus $g=4$, and the cocycle
$
\xi : \operatorname{Gal}(\overline{\mathbb{Q}}/\mathbb{Q}) \to \Aut(C)
$
which factors through $\Gal(\mathbb{Q}(i)/\mathbb{Q})$ and sends the generator $\tau$ of this group to the automorphism 
\[
\begin{array}{cccc} \alpha: & C & \to & C \\ & (x,y) & \mapsto & \left(\frac{1}{x}, \frac{iy}{x^5} \right) \end{array}.
\]
We apply Algorithm \ref{algo:MainEvenGenus} to this situation.

\textbf{Steps 1.}
%
The M\"obius transformation $x \mapsto \frac{1}{x}$ is represented by the projective class of matrices of the form $\begin{pmatrix}
0 & \lambda \\
\lambda & 0
\end{pmatrix}$. To find a matrix in $\operatorname{GL}_2(\mathbb{Q}(i))$ which represents $\alpha$ we use the recipe given in the proof of Lemma \ref{superembedding0}: the lift is $\lambda\begin{pmatrix}
0 & 1 \\
1 & 0
\end{pmatrix}$ with $\lambda^{2r-5}=i \cdot (-1)^r$ (here $r=g/2=2$), hence $\lambda=-i$. Thus in particular the lift of our cocycle to $\operatorname{GL}_2(\mathbb{Q}(i))$ sends $\tau$ to $\begin{pmatrix}
0 & -i \\
-i & 0
\end{pmatrix}$.


\textbf{Step 2.} We use the explicit version of Hilbert's Theorem 90 (see subsection \ref{sect:Hilbert90}) with $M_0=\operatorname{Id}$ to get $M=\begin{pmatrix}1 & -i \\ -i & 1 \end{pmatrix}$.

\textbf{Step 3.} We have
\[
C' : \; y^2 = \det(M)^{-4} (-i x+1)^{10}f\left( \frac{x-i}{-ix+1} \right) = -x^9+6 x^7-6 x^3+x
\]
and $M$ induces the isomorphism 
\[
\begin{array}{cccc}
\phi :\, & C' & \to & C \\
& (x,y) & \mapsto & \left(\frac{x-i}{-ix+1},\frac{2^2y}{(-ix+1)^5} \right)=\left(i\frac{x-i}{x+i},\frac{4iy}{(x+i)^5}\right).
\end{array}
\]


\subsection{Example 2}

Consider the curve $C \, : y^2=x(x^6+3x^4+1)$ over $\mathbb{Q}$. The geometric automorphism group of $C$ is cyclic of order 4, generated by $\alpha : (x,y) \mapsto (-x,iy)$. Since the automorphism $i \mapsto -i$ maps $\alpha$ to $\alpha^{-1}$, we can define a cocycle $\Gal(\mathbb{Q}(i)/\mathbb{Q}) \to \Aut(C)$ by sending the generator $\tau$ of $\Gal(\mathbb{Q}(i)/\mathbb{Q})$ to $\alpha$. Let us compute the twist of $C$ corresponding to this cocycle with Algorithm \ref{algo:Main}. 

\textbf{Step $1.$} A model as in \eqref{bigmodel} is given by
$
\begin{cases}
t^2=f(x,y,z):=x^3z+3x^2yz+y^3z \\
z^2=xy
\end{cases}.
$

\textbf{Steps $2$ and $3.$} The $2 \times 2$ matrix representing $\alpha$ in the hyperelliptic model is $\begin{pmatrix}
\zeta_8 & 0 \\ 0 & -\zeta_8
\end{pmatrix}$. Applying the embedding of Remark \ref{CoolEmbedding} we obtain that the corresponding element in $\GL_3(\overline{\mathbb{Q}})$ is simply $\overline{\xi}(\tau):=\operatorname{diag}(-1,-1,1)$. 
Notice that this matrix has order 2 while $\alpha$ has order 4, but this is to be expected: our embedding only considers the automorphism induced on $C/\langle \iota \rangle$, and $\alpha$ squares to the hyperelliptic involution, which is trivial on $C/\langle \iota \rangle$.

\textbf{Step 4.} We find $M=\operatorname{diag}(i,i,1)$. 

\textbf{Step 5.}
We compute $(z^2-xy)(M(x,y,z))=(z^2-xy)(ix,iy,z)=z^2+xy$ and
\[
\begin{aligned}
f(M(x,y,z)) & =f(ix,iy,z) & =-i f(x,y,z);
\end{aligned}
\]
it follows that ${}^{\tau}(f(M(x,y,z))=-f(M(x,y,z))$ so that $(a_\sigma)$ is the cocycle mapping $\tau \mapsto -1$ and $(p_\sigma)=0$. We can then take $v=i$, $h=0$, and $g(x,y,z)=if(M(x,y,z))=f(x,y,z)$. 
The intermediate curve $C_0$ is therefore $C_0 \, : \begin{cases} t^2=f(x,y,z) \\ z^2=-xy \end{cases}.$ 

\textbf{Step 6.} Let $\sigma_i$, for $i=1,3,5,7$, be the automorphism of $\mathbb{Q}(\zeta_8)/\mathbb{Q}$ which sends $\zeta_8$ to $\zeta_8^i$. We have that $\sigma_1, \sigma_5$ map to the identity in $\Gal(\mathbb{Q}(i)/\mathbb{Q})$, while $\sigma_3, \sigma_7$ map to $\tau$. One checks that the homomorphism $i$ of Lemma \ref{quadtwist} is trivial on $\sigma_1, \sigma_3$ and nontrivial on $\sigma_5, \sigma_7$. This means that we can take $e=-2$, because $\mathbb{Q}(\sqrt{-2})$ is the fixed field of $\sigma_3$. 
Finally, the curve we are looking for is
\[
C': \, \begin{cases} -2t^2=f(x,y,z) \\ z^2=-xy \end{cases}
\]
and an isomorphism realizing the cocycle $\xi$ is $\phi \, : (x,y,z,t) \mapsto (ix,iy,z,(1+i)t)$. 

Since the conic $z^2=-xy$ obviously has rational points, $C'$ also admits a $\mathbb{Q}$-hyperelliptic model: one can check that such a model is given by $y^2=2x^7 + 6x^3 - 2x$, and on this model an isomorphism $C' \to C$ realising the cocycle is $(x,y) \mapsto \left(\frac{i}{x}, -\frac{1}{2}(i + 1)\frac{y}{x^4} \right)$.

\subsection{Example 3}
Finally, we give an example where $C$ admits a hyperelliptic model over its field of definition but one of its twists does not.
Consider the curve $C:v^2=u^8+14u^4+1 \; /\mathbb{Q}$ and the cocycle
$
\xi : \operatorname{Gal}(\overline{\mathbb{Q}}/\mathbb{Q}) \to \Aut(C)
$
which factors through $\Gal(\mathbb{Q}(i)/\mathbb{Q})$ and sends the generator $\tau$ of this group to the automorphism 
\[
\begin{array}{cccc} \alpha: & C & \to & C \\ & (u,v) & \mapsto & \left( -\frac{1}{u}, -\frac{v}{u^4} \right) \end{array}.
\]

\smallskip

\textbf{Step 1.} $C$ admits the model
$
\; C: \; \begin{cases}
t^2=x^4+14x^2y^2+y^4 \\
xy=z^2
\end{cases}
$

\textbf{Steps 2 and 3.}
In these coordinates, $\alpha$ is given by $[x:y:z:t] \mapsto [-y:-x:z:-t]$, and we have a cocycle with values in $\Aut(C/\langle \iota \rangle)$ given by $\tau \mapsto \left[ \begin{pmatrix} 0 & -1 & 0 \\ -1 & 0 & 0 \\ 0 & 0 & 1 \end{pmatrix} \right]$. Via Remark \ref{explicit-lift} we obtain a cocycle $\overline{\xi}$ with values in $\GL_3(\mathbb{Q}(i))$ which sends $\tau$ to the matrix $ \begin{pmatrix} 0 & 1 & 0 \\ 1 & 0 & 0 \\ 0 & 0 & -1 \end{pmatrix}$.

\textbf{Steps 4 and 5.} A possible choice of $M$ is $\begin{pmatrix}1+i & 1-i & 0 \\ 1-i & 1+i & 0 \\ 0 & 0 & 2i\end{pmatrix}$. We have $v=1$, and the curve of Corollary \ref{corollary:AlmostCorrectCocycle} is
$
C_0:\,\begin{cases} X^2+Y^2+2Z^2=0 \\ t^2=48X^4+160X^2Y^2+48Y^4
\end{cases}.
$

\textbf{Step 6.} The isomorphism $\varphi : [x:y:z:t] \mapsto [M(x,y,z),t]$ acts trivially on $t$, so we have $\varphi \circ {}^\tau \varphi(t)^{-1}=t$, while $\xi_\tau(t)=-t$. It follows that $e=-1$, and the twist of $C$ corresponding to $\xi$ is 
\[
C'\, : \begin{cases} X^2+Y^2+2Z^2=0 \\ -T^2=48X^4+160X^2Y^2+48Y^4
\end{cases}
\]
Notice that this curve does not have 
a hyperelliptic model over $\mathbb{Q}$, because the conic $X^2+Y^2+2Z^2=0$ has no $\mathbb{Q}$-rational points.
It is also easy to check that the change of variables $A=X-Y, B=X+Y, C=2Z, D=T/4$ leads to the more symmetrical model
$
\begin{cases}A^2+B^2+C^2=0 \\ -2D^2 = A^4+B^4+C^4\end{cases}
$,
which agrees with the result found by a different method in \cite{AWS2016}.

\bibliographystyle{abbrv}
\bibliography{biblio}

\end{document}